\newtheorem{theorem}{Theorem}[section]
\newtheorem{lemma}[theorem]{Lemma}
\newtheorem{proposition}[theorem]{Proposition}
\theoremstyle{definition}
\theoremstyle{remark}
\numberwithin{equation}{section}
\begin{document}

%%%%%%%%%%%%%%%%%%%%% Publisher's Area please ignore %%%%%%%%%%%%%%
%\catchline{}{}{}{}{}
%%%%%%%%%%%%%%%%%%%%%%%%%%%%%%%%%%%%%%%%%%%%%%%%%%%%%%%%%%%%%%%%%%%

\title{An elementary proof for that all unoriented spanning surfaces of   
a link are related by attaching/deleting tubes and 
M\"{o}bius bands}

\author{Akira Yasuhara}

\address{Tokyo Gakugei University, Department of Mathematics, Koganei, Tokyo 184-8501, Japan\\
yasuhara@u-gakugei.ac.jp}

\maketitle

\begin{abstract}
Gordon and Litherland showed that all compact, unoriented, possibly non-orientable surfaces 
 in $S^3$ bounded by a link are realted by  attaching/deleting tubes and 
half twisted bands.  In this note we give an elementary proof for this result. 
\end{abstract}

%\keywords{spanning surface, checkerboad surface, 
%Reidemeister moves, $S^*$-equivalence}

%\ccode{Mathematics Subject Classification 2000: 57M25} 

\section{Introduction}
For a link $L$ in $S^3$, a compact, unoriented (possibly non-orientable or disconnected) 
surface in $S^3$ with boundary $L$ is called a {\em spanning surface} of $L$. 
A spanning surface is called a {\em Seifert surface} if it is orientable. 
%It is well known that two Seifert surface of a link are {\em $S$-equivalent}. 
%And Gordon and Litherland showed that two spanning surfaces are {\em $S^*$-equivalent}. 
Two spanning surfaces are {\em $S^*$-equivalent} if they are related by a finite sequence 
of the following two moves and ambient isotopies:\\
(1)~attaching/deleting a tube as illustrated in Figure~\ref{equi}~(1),\\
(2)~attaching/deleting a half twisted band locally as illustrated in Figure~\ref{equi}~(2). \\
In particular, two Seifert surfaces are {\em $S$-equivalent}
if they are related by a finite sequence 
of ambient isotopies and  attaching/deleting a tube that preserves 
orientability of surfaces.

\begin{figure}[th]
\centerline{\includegraphics[trim=0mm 0mm 0mm 0mm, width=.6\linewidth]{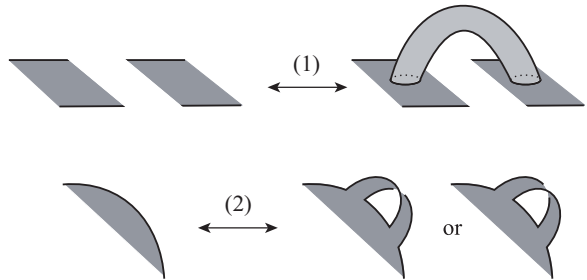}}
  \caption{(1)~attach/delete a tube; (2)~attach/delete a half twisted band locally}\label{equi}
\end{figure}

It is well known that all Seifert surfaces of a link are {\em $S$-equivalent} 
\cite{L}, \cite{R}.
This result is generalized to unoriented spanning surfaces \cite{GL}.

\begin{theorem}(Gordon and Litherland \cite{GL})\label{main}
All spanning surfaces of a link are $S^*$-equivalent.
\end{theorem}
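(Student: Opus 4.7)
The plan is to reduce Theorem~\ref{main} to the classical fact that any two Seifert surfaces of a link are $S$-equivalent \cite{L,R}, which is a special case of $S^*$-equivalence. It suffices to show that every spanning surface of $L$ is $S^*$-equivalent to a Seifert surface of $L$: two arbitrary spanning surfaces can then be $S^*$-equivalently replaced by Seifert surfaces, and the classical theorem supplies a sequence of $S$-moves (hence $S^*$-moves) between those.

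To carry out this reduction, let $F$ be a non-orientable spanning surface. Since $F$ is non-orientable, there is a simple closed curve $\gamma\subset\mathrm{int}(F)$ whose regular neighborhood in $F$ is a M\"{o}bius band $M$. The strategy is to use tube attachments (move (1)) to bring $(S^3,F)$ into a local configuration in which $M$ plays the role of the half-twisted band of Figure~\ref{equi}(2), with the rest of $F$ near $\partial M$ forming the disk to which the band attaches. Once in this form, move (2) deletes $M$, replacing it by a disk and reducing the crosscap number of $F$ by one. Iterating this step produces a Seifert surface $S^*$-equivalent to $F$. Concretely, one fixes a 3-ball $B\subset S^3$ with $M\subset\mathrm{int}(B)$ embedded in a standard way, and then uses tube attachments to clean up $F\cap B$ until $F\cap B=M$ and $F$ meets $\partial B$ only in a collar of $\partial M$; each extra disk or annulus component of $F\cap B$ (arising because other parts of $F$ cross through $B$) is pushed out of $B$ by attaching a tube connecting it to $F\setminus B$ and then isotoping.

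The main obstacle is this cleanup step: since $M$ may be knotted or linked with the rest of $F$, it is not immediately clear that tube attachments alone can disentangle the two without cutting $F$. The key technical input I would rely on is that the complement of a M\"{o}bius band in $S^3$ is a solid torus, so the ``extra pieces'' of $F$ inside $B$ lie in a handlebody and can therefore be simplified by standard handle-slide arguments, each of which is realizable as a tube move. Once the cleanup is accomplished in a given 3-ball, move (2) deletes $M$, and induction on the crosscap number completes the reduction to the Seifert case, at which point \cite{L,R} finishes the proof.
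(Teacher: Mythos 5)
Your overall strategy (kill the crosscaps one M\"{o}bius band at a time by tube moves plus move (2) of Figure~\ref{equi}, then invoke the classical $S$-equivalence theorem for Seifert surfaces) is natural, but its central step is asserted rather than proved, and the one technical claim you offer in its support is false. The move of Figure~\ref{equi}~(2) deletes a half-twisted band only when that band sits in the standard \emph{local} position shown there. For a general non-orientable $F$, the orientation-reversing curve $\gamma$ may be knotted in $S^3$, and its M\"{o}bius-band neighborhood $M$ may be linked with the rest of $F$ arbitrarily. Your cleanup rests on the assertion that ``the complement of a M\"{o}bius band in $S^3$ is a solid torus''; a regular neighborhood of $M$ is a solid torus with core $\gamma$, so its complement is the exterior of $\gamma$, which is a solid torus only when $\gamma$ is unknotted --- and even then this says nothing about the other sheets of $F$ passing through your ball $B$. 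Moreover, ``each handle slide is realizable as a tube move'' is exactly the kind of statement that needs proof here: disentangling $M$ from the rest of $F$ without cutting $F$ \emph{is} the global difficulty of the theorem, and you have deferred it rather than resolved it. There is a second, smaller gap: the theorem of \cite{L}, \cite{R} (and its elementary proof in \cite{BFK}) concerns Seifert surfaces of a fixed \emph{oriented} link. For a link of several components, the two Seifert surfaces produced at the end of your reduction may induce different orientations on $L$, in which case they need not be $S$-equivalent, and relating them again requires non-orientable moves --- another instance of the statement being proved.

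For contrast, the paper's route never has to standardize a M\"{o}bius band: it puts an arbitrary spanning surface into disk-band form and resolves band crossings by tubes to reach a checkerboard surface of some diagram (Proposition~3.1), shows that the two checkerboard surfaces of a single diagram are $S^*$-equivalent, and checks that the moves on checkerboard surfaces induced by Reidemeister moves are realized by $S^*$-equivalence (Proposition~\ref{lemma:BtoW} and Lemma~\ref{lemma:RbyS}). All the global difficulty is thereby absorbed into Reidemeister's theorem, which is what makes the argument elementary. If you want to salvage your approach, you would need an independent argument that an embedded M\"{o}bius band in a spanning surface can be brought into the local position of Figure~\ref{equi}~(2) by tubes alone; I do not see how to do this without essentially redoing the paper's argument.
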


In \cite{BFK}, Bar-Natan, Fulman and Kauffman gave an elementary proof of 
the result that all Seifert surfaces of a link are $S$-equivalent.
Here we will give an elementary proof of Theorem~\ref{main}.
This note is inspired by the article \cite{BFK}. 

%\section{Proof of Theorem~\ref{main}}
\section{Checkerboad surfaces}

For a diagram $D$ in the $2$-sphere $S^2$ of a link $L$, 
we color the regions of $S^2\setminus D$ with black and white like a checkerboard. 
Then we obtain two spanning surfaces of $L$ that correspond to the black regions  
 and white regions of $S^2\setminus D$. We call these two spanning surfaces the 
{\em checkerboard surfaces} for $D$. 
From now on, we will not care the color of these surfaces and 
describe both surfaces by shading.

The local changes, which are naturally induced by Reidemeister moves I, II and III, 
for checkerboard surfaces as 
illustrated in Figure~\ref{R-move}  are called $R$-moves of type I, II and III respectivery.

\begin{figure}[th]
\centerline{\includegraphics[trim=0mm 0mm 0mm 0mm, width=.8\linewidth]{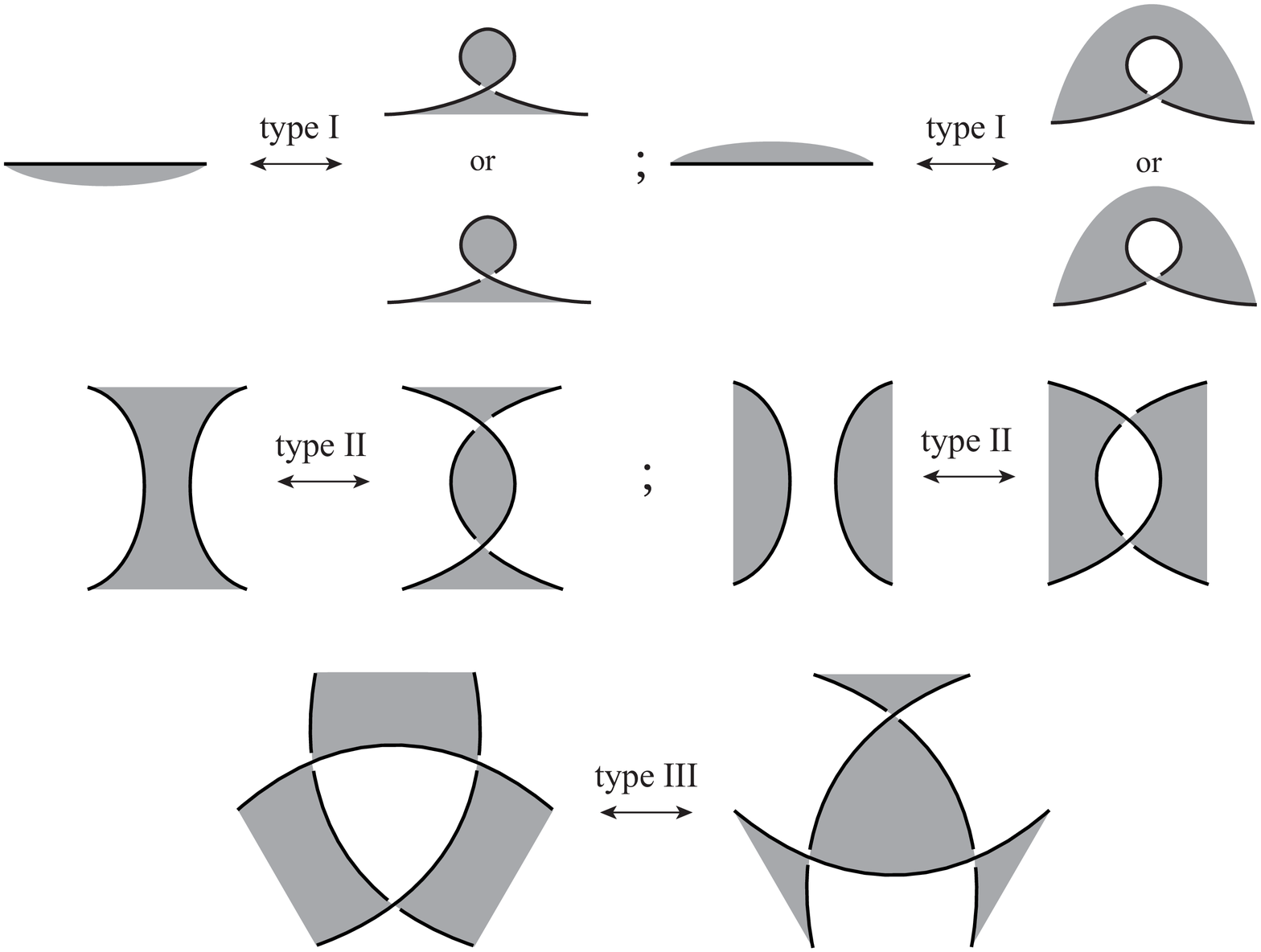}}
  \caption{$R$-moves}\label{R-move}
\end{figure}

\begin{proposition}\label{lemma:BtoW}
%For  a given link diagram, the two checkerboard surfaces are related by a finite sequences 
%of $R$-moves and ambient isotopies. 
All checkerboard surfaces for all diagrams of a link are related by a finite sequences 
of $R$-moves and ambient isotopies. 
\end{proposition}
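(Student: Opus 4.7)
The plan is to combine Reidemeister's theorem with a local case analysis, and then to address the ``color swap'' between the two checkerboard surfaces of a single diagram. By Reidemeister's theorem, any two diagrams $D_1$ and $D_2$ of $L$ on $S^2$ are related by a finite sequence of Reidemeister moves of types I, II, III and ambient isotopies of $S^2$. Each such move is supported inside a small disk $\Delta \subset S^2$ and leaves the diagram fixed outside $\Delta$, so any checkerboard coloring of the outside extends uniquely to colorings of the diagram inside $\Delta$, both before and after the move. Thus a shading of $D_1$ propagates through the Reidemeister sequence to a definite shading of $D_2$.

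Next, I would verify by local inspection that each Reidemeister move of type $n$, applied in a small disk, modifies the shaded checkerboard surface by the $R$-move of type $n$ depicted in Figure~\ref{R-move}, and modifies the oppositely shaded surface by another $R$-move or by an ambient isotopy. For each of the three move types there are only a small number of local shading configurations to enumerate, so the verification is a routine finite check. Combined with the propagation of shadings, this already shows that if $D_1$ and $D_2$ are two diagrams of $L$, then their compatibly shaded checkerboard surfaces are related by a sequence of $R$-moves and ambient isotopies.

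To complete the proof, one must address the \emph{color swap}: the two checkerboard surfaces of a single diagram $D$ correspond to opposite shadings and are, a priori, not connected by the propagation argument above. Here the idea is to exhibit, for any $D$, a sequence of $R$-moves connecting one checkerboard surface to the other. As a model case, consider the trivial $2$-component link: the two checkerboard surfaces of the $0$-crossing diagram are a pair of disjoint disks on the one hand and an annulus on the other, and applying a single $R$-II move to create two crossings between the two circles attaches a pair of saddles bridging the two disks, producing an annulus ambient isotopic to the complementary checkerboard surface. The general case should then be handled by applying such bridging $R$-II moves at suitably chosen sites of $D$ until the transformed surface becomes ambient isotopic to its partner.

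The main obstacle will be this color-swap step. While invoking Reidemeister's theorem and enumerating the per-move local cases are essentially bookkeeping, producing a uniform and rigorous recipe for transforming one checkerboard surface of a general diagram into the other via $R$-moves requires an explicit construction, and I expect this to occupy the bulk of the argument---adapting the spirit of the Bar-Natan--Fulman--Kauffman approach from the orientable to the unoriented setting.
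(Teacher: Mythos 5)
Your first step---propagating a shading through a Reidemeister sequence and checking that each Reidemeister move induces an $R$-move on either shading---is fine, and it matches what the paper dispatches in one sentence: any checkerboard surface for $D'$ is related by $R$-moves to \emph{one} of the two checkerboard surfaces for $D$. The genuine content of the proposition is the color-swap step, and there your proposal has a real gap: you exhibit the swap only for the $0$-crossing diagram of the two-component unlink and then say the general case ``should be handled by applying such bridging $R$-II moves at suitably chosen sites of $D$ until the transformed surface becomes ambient isotopic to its partner.'' That is not an argument, and as stated the recipe does not obviously generalize. For a knot diagram there are no two components to bridge, and for a generic diagram the two checkerboard surfaces can differ in genus, orientability, and number of components, so there is no reason a few local saddle insertions would carry one onto the other up to ambient isotopy; you would still owe a proof of that terminal isotopy, which is exactly the hard part you deferred.

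The paper closes this gap with a different and genuinely global device: choose a simple closed curve $C$ in $S^2$ that cuts $D$ into a trivial (crossing-free) arc and the rest, and slide that trivial arc across the complementary disk of $C$---that is, around the back of the sphere. Because the diagram lives in $S^2$ rather than $\mathbb{R}^2$, this isotopy returns the diagram to itself while exchanging the black and white regions, and it is realized by a finite sequence of Reidemeister moves, hence of $R$-moves on the surfaces (Figure~\ref{BtoW}). This single uniform construction works for every diagram of every link and replaces the ad hoc site-by-site surgery you propose. To repair your proof you would either need to import this sphere trick or supply an explicit, fully general construction for your bridging strategy, including a proof that the resulting surface is ambient isotopic to the opposite checkerboard surface.
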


\begin{proof}
It is obvious that for two link diagrams $D$ and $D'$ of a given link, a checkerboad surface for $D'$ is 
related to one of two checkerboad surfaces for $D$.  
Hence it is enough to show that the two checkerboard surfaces for $D$ are related by a finite sequences 
of $R$-moves and ambient isotopies. 
Figure~\ref{BtoW} shows that the two checkerboard surfaces for the diagram $D$ 
are related by a finite sequence of $R$-moves and ambient isotopy.
\end{proof}

\begin{figure}[th]
\centerline{\includegraphics[trim=0mm 0mm 0mm 0mm, width=.8\linewidth]{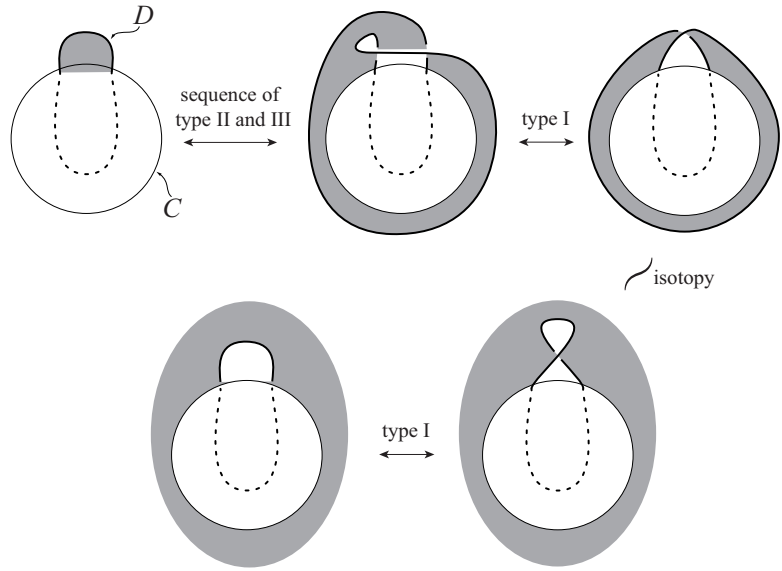}}
  \caption{$C$ is a simple closed curve in $S^2$ that separates $D$ into 
one trivial arc and the other. }\label{BtoW}
\end{figure}

%\section{$R$-move and $S^*$-equivalence}

\begin{lemma}\label{lemma:RbyS}
Each $R$-move for checkerboard surfaces can be realized by 
$S^*$-equivalence.
\end{lemma}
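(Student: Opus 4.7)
Each $R$-move is a local modification of the surface supported in a small ball whose boundary data are fixed, so to prove the lemma it suffices to exhibit, for each of the three $R$-move types in Figure~\ref{R-move}, a local sequence of ambient isotopies together with the two $S^*$-equivalence moves that realizes the change. I would handle the three types one at a time.

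The type~I $R$-move produces (or removes) a small half-twisted strip attached to the rest of the shaded surface, so it is essentially by definition an instance of move~(2) in the $S^*$-equivalence; a direct comparison of the two local pictures should suffice. For the type~II $R$-move there are two subcases depending on which of the two regions bounded by the bigon is shaded. In the easy subcase the surface changes only inside the bigon, and the change is realized by an ambient isotopy that pushes the interior disk across. In the other subcase the local surfaces before and after consist of the same two bands joined differently; I would realize the move by attaching (or deleting) a single tube bridging these bands, i.e.\ by one application of move~(1).

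The type~III $R$-move is where I expect the main technical obstacle to lie, and I would approach it by a case analysis on the shadings of the three regions incident to the triangle. In the cases where the central triangle (or an appropriate adjacent region) is shaded, the surfaces on the two sides of the move agree up to sliding a strand of the surface across a disk, so the move is realized purely by ambient isotopy. In the remaining cases I would use an auxiliary tube: first apply move~(1) to introduce a tube that creates enough room for the strand to be pushed past the crossing on the far side, then isotope the surface across the triangle, and finally apply move~(1) again to delete the tube. The care needed here is twofold: one must check that each intermediate object is still a bona fide spanning surface of the same link, and one must verify that the net local change is precisely the type~III $R$-move. Since the number of distinguishable shadings is small after accounting for the symmetries of the triangle, this analysis should be finite and explicit, and I expect it to be the only step that requires drawing several intermediate pictures rather than a single one.
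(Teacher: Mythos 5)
Your handling of types I and II essentially matches the paper's proof (one subcase of each is an ambient isotopy, the other is a single half‑twisted band, respectively a single tube). The genuine gap is in your plan for type III, which uses only ambient isotopies and tube moves (move (1)), and in some shadings claims a pure isotopy. No such sequence can realize the type III $R$-move. For a connected diagram the checkerboard surface is one disk per shaded region with one half‑twisted band per crossing, so its Euler characteristic equals $(\text{number of shaded regions})-(\text{number of crossings})$. Under a Reidemeister III move the number of crossings is unchanged, while the inner triangular region always switches its checkerboard color: the six outer regions of the local picture keep their colors and alternate around the boundary of the local disk, and the move exchanges which alternating triple of them is edge‑adjacent to the triangle. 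Hence every type III $R$-move changes $\chi$ by exactly $\pm 1$. Ambient isotopy preserves $\chi$ and a tube changes it by $\pm 2$, so any realization of the type III move must contain an odd number of half‑twisted band moves. In particular the ``shaded triangle'' case is never an isotopy (before the move the local surface is a tree of four disks joined by three bands, hence a disk; after, it is three disks joined in a cycle by three bands, hence an annulus or M\"obius band), and the ``tube in, isotope, tube out'' recipe can never close up.

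This parity obstruction is precisely why the paper's Figure~\ref{deform} realizes the type III move by combining one half‑twisted band move with one tube move (the local change of Figure~\ref{observation}~(1)): the half‑twisted band is forced, not optional. The fix to your proposal is to replace the case analysis for type III by such an explicit sequence in which exactly one half‑twisted band is attached or deleted alongside the tube move.
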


\begin{proof}%[Proof of Lemma~\ref{lemma:RbyS}]
%Our proof is similar to that in \cite{BFK}. 
We start with a easy but important claim mentioned in \cite{BFK}. 
While they consider only orientable surfaces, the claim holds for 
non-orientable surfaces.
 
\medskip
\noindent{\em Claim.} 
The two local changes for spanning surfaces of a link as illustrated in Figure~\ref{observation}
can be realized by attaching/deleting a tube. 
\medskip

\begin{figure}[th]
\centerline{\includegraphics[trim=0mm 0mm 0mm 0mm, width=.9\linewidth]{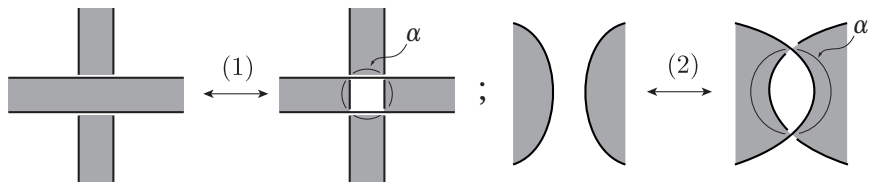}}
  \caption{For each local change, the tube appears as the regular neighborhood
of the loop $\alpha$ in the right-side surface. }\label{observation}
\end{figure}

One of $R$-moves of type I (resp. II) is related by ambient isotopy and the other $R$-move 
of type I (resp. II) is realized by attaching/deleting a half twisted band 
(resp. a tube as the local change of Figure~\ref{observation}~(2)).

For $R$-move of type III, Figure~\ref{deform} completes the proof. 
Here we use attaching/deleting a half twisted band and 
the local change of Figure~\ref{observation}~(1). 
\end{proof}

\begin{figure}[th]
\centerline{\includegraphics[trim=0mm 0mm 0mm 0mm, width=.8\linewidth]{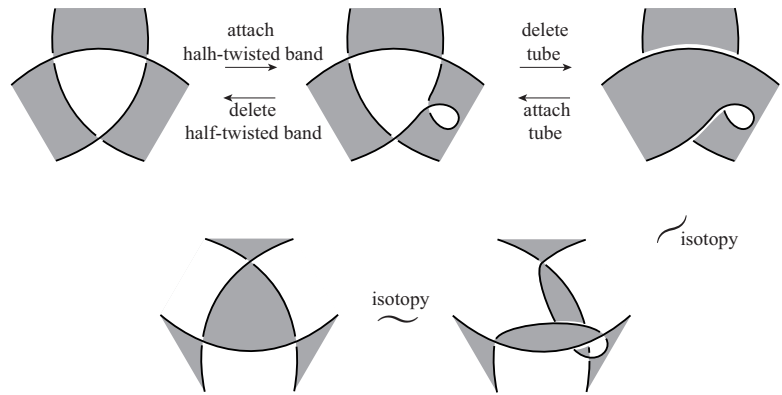}}
  \caption{$R$-move of type III given by $S^*$-equivalence}\label{deform}
\end{figure}

By combining Proposition~\ref{lemma:BtoW} and Lemma~\ref{lemma:RbyS}, we have the 
following proposition.

\begin{proposition}\label{checckerboad}
All checkerboard surfaces for all diagrams of a link are $S^*$-equivalent. 
\end{proposition}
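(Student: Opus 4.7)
The plan is simply to chain the two preceding results. Given any two checkerboard surfaces $F$ and $F'$ arising from (possibly different) diagrams of the link $L$, I would first invoke Proposition~\ref{lemma:BtoW} to produce a finite sequence
\[
F = F_0 \to F_1 \to F_2 \to \cdots \to F_n = F',
\]
in which each arrow is either an ambient isotopy or an $R$-move (of type I, II, or III).

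Next, I would replace each arrow by a piece of $S^*$-equivalence. Ambient isotopies are already part of the definition of $S^*$-equivalence, so those arrows need no modification. For each $R$-move arrow $F_i \to F_{i+1}$, Lemma~\ref{lemma:RbyS} supplies a finite sequence of tube attachments/deletions, half twisted band attachments/deletions, and ambient isotopies realizing it. Concatenating all these sub-sequences exhibits $F$ and $F'$ as $S^*$-equivalent.

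The only thing to verify is that $S^*$-equivalence is transitive, which is immediate from its definition as the equivalence relation generated by the two local moves and ambient isotopies. Thus there is no real obstacle; the proposition is essentially a formal corollary of Proposition~\ref{lemma:BtoW} and Lemma~\ref{lemma:RbyS}, the content having already been absorbed into those two statements.
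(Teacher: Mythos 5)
Your proposal is correct and is exactly the paper's argument: the author derives this proposition by combining Proposition~\ref{lemma:BtoW} with Lemma~\ref{lemma:RbyS}, replacing each $R$-move in the sequence by its $S^*$-realization, just as you describe.
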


\section{Proof of Theorem~\ref{main}}

The following proposition, together with Proposition~\ref{checckerboad}, 
completes the proof of Theorem~\ref{main}. 

\begin{proposition}
Any spanning surface of a link is 
$S^*$-equivalent to a checkerboard surface for a diagram of the link. 
\end{proposition}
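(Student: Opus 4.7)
The plan is to put $F$ into a standard ``disk-and-band'' position inside a fixed $2$-sphere and then match this position with a checkerboard surface via $S^{*}$-moves. First, by choosing a handle decomposition of $F$, I would write $F = \bigsqcup_{i} D_{i} \cup \bigsqcup_{j} b_{j}$, where the $D_{i}$ are disk $0$-handles and the $b_{j}$ are band $1$-handles. After an ambient isotopy in $S^{3}$, I can arrange the $D_{i}$ to be disjoint flat disks inside a standard $2$-sphere $S \subset S^{3}$ and each band $b_{j}$ to lie inside a thin bicollar $S \times [-\varepsilon,\varepsilon]$. In this position each band may carry several half-twists and different bands may cross each other transversally inside the bicollar; projecting to $S$ produces a planar shadow with a finite collection of crossings.

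Second, I would apply $S^{*}$-moves to reshape this presentation into the exact format of a checkerboard surface---disks in $S$ joined by one half-twist band at each crossing of a diagram on $S$. The two tube moves from the Claim in the proof of Lemma~\ref{lemma:RbyS} allow me to separate bands that link nontrivially and to simplify local handle structure, eventually arranging that distinct bands only meet inside the bicollar in standard crossings. A band carrying $n \ge 2$ half-twists is then expanded, via an explicit sequence of half-twisted band attachments and tube attachments, into a chain of $n$ single half-twist bands separated by small new disks placed on $S$; this chain is exactly the checkerboard model of an $n$-crossing twist region. After these reductions, $F$ is visibly the checkerboard surface of the diagram $D$ on $S$ obtained by reading off the shadow, with the $D_{i}$ as the shaded regions. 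Together with Proposition~\ref{checckerboad}, this completes the proof of Theorem~\ref{main}.

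The hard part will be the second step: the explicit realization of these reduction moves. Turning a multi-twist band into a chain of half-twist bands, and resolving nonstandard band-linkings into diagram crossings, both genuinely change the interior topology of $F$ (adding handles and/or cross-caps), so they cannot be performed by isotopy alone and must combine tube and half-twisted band attachments in a carefully controlled way. One must also check termination, and verify that the final configuration is a bona fide checkerboard surface---that is, that the resulting shadow on $S$ is a valid link diagram whose boundary is $\partial F$ and whose shaded regions agree with the $D_{i}$.
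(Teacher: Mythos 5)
Your strategy is essentially the one the paper uses: put $F$ in disk-and-band position with the disks lying flat in a $2$-sphere and all singularities of the projection confined to half-twists of bands and crossings between bands, and then apply the tube move of the Claim (the local change of Figure~\ref{observation}~(1)) at each crossing between bands to arrive at a checkerboard surface. The one place your plan goes astray is the treatment of a band carrying $n\ge 2$ half-twists: no $S^*$-move is needed there at all. A band lying in the projection sphere except at $n$ consecutive half-twists is already, locally, the checkerboard surface of its own projection --- the flat pieces of the band between successive twists are shaded regions of the diagram, and each half-twist contributes exactly one crossing at which two shaded regions meet (compare the standard $(2,n)$ torus link diagram, one of whose checkerboard surfaces is the twisted band itself). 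Equivalently, your ``chain of $n$ single half-twist bands separated by small new disks'' is ambient isotopic to the original multi-twisted band, so the assertion that this conversion ``genuinely changes the interior topology of $F$'' is false; had you actually attached tubes or cross-caps to effect it, you would have changed the homeomorphism type and could not land on the chain you describe. With that step deleted, the only non-isotopy moves required are the tube attachments at band crossings --- after a generic projection these are exactly the standard crossings of Figure~\ref{band}, so there are no more exotic ``nontrivial linkings'' to resolve --- and your argument coincides with the paper's once those tube moves are carried out as in the Claim of Lemma~\ref{lemma:RbyS}.
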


\begin{proof}
A spaning surface $F$ can be assumed as a surface which is 
one disk with several bands attached since $F$ is a compact surface with 
boundary. For example, see Figure~\ref{disk-band}. 
Moreover, we may assume that $F$ has a diagram whose singular parts appear 
in only bands as half twists of bands or crossings between bands as illustrated in  Figure~\ref{band}.
Applying local changes of Figure~\ref{observation}~(1) to all crossings between bands, 
we have a checkerboard surface. 
\end{proof}

\begin{figure}[th]
\centerline{\includegraphics[trim=0mm 0mm 0mm 0mm, width=.4\linewidth]{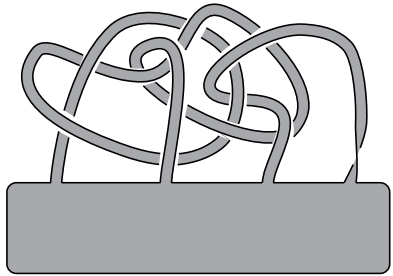}}
  \caption{A disk with two bands}\label{disk-band}
\end{figure}

\begin{figure}[th]
\centerline{\includegraphics[trim=0mm 0mm 0mm 0mm, width=.5\linewidth]{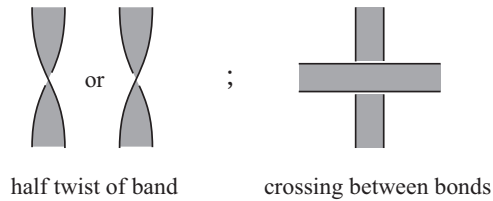}}
  \caption{singular parts}\label{band}
\end{figure}

%\begin{figure}[th]
%\centerline{\psfig{file=jktrf1.eps,width=2.2in}}
%\vspace*{8pt}
%\caption{Labeled tree {\it T}.\label{fig1}}
%\end{figure}

\section*{Acknowledgments}
The author is partially supported by a Grant-in-Aid for Scientific Research (C) 
($\#$23540074) of the Japan Society for the Promotion of Science.

\end{document}